\documentclass[11pt]{article}

\usepackage[margin=1in]{geometry}
\usepackage{amsmath, amssymb, amsthm, mathtools}
\usepackage{microtype}
\usepackage{enumitem}
\usepackage{pgfplots}
\pgfplotsset{compat=1.18}

\newtheorem{theorem}{Theorem}
\newtheorem{proposition}{Proposition}
\newtheorem{lemma}{Lemma}
\newtheorem{remark}{Remark}
\newtheorem{corollary}{Corollary}

\newcommand{\R}{\mathbb{R}}
\newcommand{\N}{\mathbb{N}}
\newcommand{\argmin}{\operatorname*{argmin}}
\newcommand{\prox}{\operatorname{prox}}

\newcommand{\gradmap}{\mathcal{G}}

\newcommand{\abs}[1]{\lvert{#1}\rvert}

\usepackage[dvipsnames]{xcolor}

\usepackage[
    colorlinks=true,
    linkcolor=MidnightBlue,
    citecolor=MidnightBlue,
    urlcolor=RoyalBlue
]{hyperref}

\usepackage[authoryear]{natbib}
\usepackage{authblk}

\title{AdaGrad does not adapt to H\"older-smoothness \\ for composite objectives}

\author{
Matia Bojovi\'c$^{1,2}$
\quad
Saverio Salzo$^{1,3}$
\quad
Massimiliano Pontil$^{1,4}$
\\
{\small $^1$ Computational Statistics and Machine Learning, Istituto Italiano di Tecnologia, Genoa, Italy}
\\
{\small $^2$ Department of Mathematics, University of Genoa, Genoa, Italy}
\\
{\small $^3$ DIAG, Sapienza University of Rome, Rome, Italy}
\\
{\small $^4$ Department of Computer Science, University College London, London, UK}
}

\date{}
\usepackage[textsize=tiny]{todonotes}

\begin{document}

\maketitle
\begin{abstract}
    \noindent 
    We exhibit a simple deterministic one-dimensional convex composite optimization problem for which AdaGrad scheme does not achieve the classical convergence rate $\mathcal{O}(n^{-(1+\nu)/2})$ associated with  H\"older-smooth objectives. The example highlights a basic mismatch between classical AdaGrad accumulation and composite optimality. A main insight is that the gradient of the smooth term may not vanish at the optimum, causing AdaGrad to keep reducing its stepsize excessively and converge more slowly.  
    We also discuss why alternative accumulation mechanisms based on gradient mappings or on successive gradient differences, avoid this pathology.
\end{abstract}

\section{Introduction}

Adaptive gradient methods are among the standard tools for training machine learning models. Their appeal is that they reduce the need to tune a fixed learning rate by adjusting the effective stepsize using information observed along the optimization trajectory. 
AdaGrad, introduced by \citet{JMLR:v12:duchi11a}, is a prototypical example: it rescales the update by the square root of the cumulative sum of past squared subgradients, coordinate by coordinate. The method was originally proposed for nonsmooth Lipschitz-continuous composite convex optimization, achieving the optimal rate \(\mathcal{O}(1/\sqrt n)\) in the objective gap. \\

\noindent
Later works considered the smooth setting and asked whether AdaGrad can adapt to the unknown smoothness level of the objective, while attaining the corresponding standard rate. This question is closely related to the notion of universality introduced by \citet{nesterov2015universal}, namely the ability of an algorithm to achieve the optimal rate without prior knowledge of the smoothness level.
For AdaGrad, an early result in this direction was given by \citet{levy2018online}, who, to the best of our knowledge, is the first to show that AdaGrad
also adapts to smooth problems with Lipschitz-continuous gradients, achieving the standard rate \(\mathcal O(1/n)\). The settings described above are the endpoints of a broader class of functions, which is that of H\"older-smoothness \(C^{1,\nu}\), $\nu\in[0,1]$. For this class of functions, \citet{orabona2023normalized} has proved that AdaGrad achieves the rate \(\mathcal O(n^{-(1+\nu)/2})\), fully interpolating between \(\mathcal{O}(1/ \sqrt n)\) and \(\mathcal{O}(1/n)\). \\

\noindent
However, all these results are formulated in the noncomposite setting, where the update reduces to a gradient step with an adaptive stepsize or diagonal scaling.
In this simplified setting, if \(x_\star\) minimizes a differentiable convex function \(f\) over the whole space, then
\[
\nabla f(x_\star)=0.
\]
Thus, along a convergent deterministic trajectory, the quantities accumulated by AdaGrad, namely the squared gradient norms, are expected to vanish. The growth of the accumulator is therefore controlled by progress toward stationarity: as the iterates approach a minimizer, AdaGrad keeps adding smaller and smaller terms, allowing for the appropriate convergence rates in the given class of functions.  \\

\noindent
Many problems in machine learning and statistics are naturally formulated in composite form,
\begin{equation}
    \min_{x\in\R^d} F(x) := f(x)+\varphi(x),
\label{eq:composite-problem}
\end{equation}
where \(f\colon\R^d\to\R\) is convex and smooth, while \(\varphi\colon\R^d\to\R\cup\{+\infty\}\) is proper, convex and lower semicontinuous. In applications, \(\varphi\) often represents nonsmooth regularization, such as an \(\ell_1\)-penalty, or constraints through an indicator function. The point we emphasize, however, is not nonsmoothness itself, but the splitting of the objective. In a composite problem, stationarity is not measured by the gradient of the smooth part alone. The first-order optimality condition is
\[
0\in \nabla f(x_\star)+\partial\varphi(x_\star),
\]
where \(\partial\varphi(x_\star)\) denotes the subdifferential of \(\varphi\) at \(x_\star\). Thus, the gradient of $f$ does not vanish at a minimizer, since it may be balanced by the first-order contribution of \(\varphi\). This creates a mismatch for AdaGrad when its metric is driven only by \(\lVert\nabla f(x_n)\rVert\): even if the iterates approach a composite minimizer, the accumulated gradient norms may keep growing linearly. Consequently, the effective stepsizes may decay as if the method were not approaching stationarity. \\

\noindent
The purpose of this note is to make this obstruction explicit. We construct a deterministic one-dimensional convex composite problem for which the smooth part is H\"older-smooth, but the gradient of the smooth part remains bounded away from zero along the AdaGrad trajectory. The example uses an affine \(\varphi\), showing that the obstacle is not caused by nonsmoothness of the regularizer, but by gradient's accumulation applied only to one component of the objective. As a consequence, the AdaGrad weights grow at least as the square root of the iteration counter, and the resulting effective stepsize becomes too small to retain the appropriate H\"older-smooth composite rate. \\ [-1ex]

\noindent
\textbf{Contributions. }Our main result shows that for every \(\nu\in\left]0,1\right]\) and $\alpha\in\left]1/{2},(1+\nu)/{2}\right[$, there exists a one-dimensional convex composite objective \(F=f+\varphi\), with \(f\in C^{1,\nu}(\R)\), $\varphi\in\Gamma_0(\R)$, such that for every \(\varepsilon\ge0\) and every base stepsize \(\eta\) in a suitable range around zero, the corresponding AdaGrad iterates 
\begin{equation}\label{update}
    x_{n+1}
    =
    \prox_{\frac{\eta}{w_n}\varphi}
    \left(
        x_n-\frac{\eta}{w_n}f'(x_n)
    \right), \quad {\rm where}\,\, w_n
    =
    \varepsilon+
    \left(
        \sum_{k=0}^n |f'(x_k)|^2
    \right)^{1/2}
\end{equation}
satisfy
\[
    F(x_n)-F_\star
    \ge \kappa
    n^{-\alpha},
\]
for every $n\in\N$. This rules out the expected rate  \(\mathcal{O}(n^{ -(1+\nu)/2})\)
for AdaGrad in the deterministic composite setting, marking a conceptual difference with the unconstrained scenario. \\

\noindent
\textbf{Related works. }The issue of the accumulation mechanism of the gradients in AdaGrad has already been considered in the literature. Indeed, there exist works proposing different accumulation schemes. 
\citet{antonakopoulos2025adaptive,wang2026universal} analyze an AdaGrad-type algorithm, which accumulates gradient mapping evaluations instead of gradients. Indeed, in a composite setting, the gradient mapping fully characterizes the stationarity and can effectively serve as an accumulation scheme. On the other hand, in \citep{bojovic2026adagrad}, the accumulation is driven by successive gradient differences. In this case, a nonzero limiting value of $\nabla f(x_n)$ does not by itself force linear growth of the metric. Additional considerations are given in Section~\ref{section:4}. \\

\noindent
\textbf{Notation. }We denote by \(\Gamma_0(\mathcal \R)\) the class of functions
\(h\colon\mathcal \R\to\R\cup\{+\infty\}\) that are proper, convex, and lower
semicontinuous. If \(h\in\Gamma_0(\mathcal \R)\), we denote by
\(\argmin h\) the set of its minimizers, while its subdifferential is the set-valued mapping
$\partial h\colon \R\to 2^\R$
such that, for every \(x\in\mathcal \R\), by
\(
    \partial h(x)
    :=
    \left\{
        u\in\mathcal \R
        \ \middle|\
        h(y)\ge h(x)+\langle{y-x},\;{u}\rangle
        \text{ for every }y\in\mathcal \R
    \right\}.
\)
For $x\in\R$, $\alpha>0$, we define $\prox_{\alpha h}(x)
    :=
    \argmin_{y\in\R}
    \left\{
        h(y)+\alpha\vert y-x\vert^2
    \right\}$. For $\nu\in \left]0,1\right]$ and $h\colon\R\to\R$, we say that \(h\in C^{1,\nu}(\R)\) if \(h\) is differentiable and there exists \(L_\nu>0\) such that $|h'(x)-h'(y)|
    \le
    L_\nu |x-y|^\nu
    \;
    \text{for every }x,y\in\R$.

\begin{figure}[t]
\centering
\begin{tikzpicture}

\def\cval{0.8}

\begin{axis}[
    name=plotf,
    width=0.47\textwidth,
    height=0.34\textwidth,
    axis lines=middle,
    xlabel={$x$},
    ylabel={$f(x)$},
    xmin=-2.1, xmax=2.1,
    ymin=-1.35, ymax=2.45,
    samples=500,
    domain=-2.1:2.1,
    grid=major,
    legend style={
        draw=none,
        fill=none,
        at={(1.10,-0.18)},
        anchor=north,
        legend columns=3
    },
]

\addplot[thick, blue]
{
    (abs(x) <= 1 ? \cval*(abs(x)^4)/4 : \cval*(abs(x)-1+1/4)) - x
};
\addlegendentry{$p=4$}

\addplot[thick, red]
{
    (abs(x) <= 1 ? \cval*(abs(x)^6)/6 : \cval*(abs(x)-1+1/6)) - x
};
\addlegendentry{$p=6$}

\addplot[thick, teal]
{
    (abs(x) <= 1 ? \cval*(abs(x)^8)/8 : \cval*(abs(x)-1+1/8)) - x
};
\addlegendentry{$p=8$}

\addplot[dashed, gray, forget plot] coordinates {(-1,-1.35) (-1,2.45)};
\addplot[dashed, gray, forget plot] coordinates {(1,-1.35) (1,2.45)};

\end{axis}

\begin{axis}[
    at={(plotf.east)},
    xshift=0.07\textwidth,
    anchor=west,
    width=0.47\textwidth,
    height=0.34\textwidth,
    axis lines=middle,
    xlabel={$x$},
    ylabel={$f'(x)$},
    xmin=-2.1, xmax=2.1,
    ymin=-1.95, ymax=0.5,
    samples=500,
    domain=-2.1:2.1,
    grid=major,
]

\addplot[thick, blue]
{
    (abs(x) <= 1 ? \cval*(x >= 0 ? abs(x)^3 : -abs(x)^3) : \cval*(x >= 0 ? 1 : -1)) - 1
};

\addplot[thick, red]
{
    (abs(x) <= 1 ? \cval*(x >= 0 ? abs(x)^5 : -abs(x)^5) : \cval*(x >= 0 ? 1 : -1)) - 1
};

\addplot[thick, teal]
{
    (abs(x) <= 1 ? \cval*(x >= 0 ? abs(x)^7 : -abs(x)^7) : \cval*(x >= 0 ? 1 : -1)) - 1
};

\addplot[dashed, gray, forget plot] coordinates {(-1,-1.95) (-1,-0.05)};
\addplot[dashed, gray, forget plot] coordinates {(1,-1.95) (1,-0.05)};

\end{axis}

\end{tikzpicture}

\caption{Left: \(f(x)=g_{p,c}(x)-x\). Right: \(f'(x)=g'_{p,c}(x)-1\), for \(p=4,6,8\) and \(c=0.8\).}
\label{fig:f-and-derivative}
\end{figure}
\section{A pathological example for composite problems}\label{section:2}

We now introduce the one-dimensional composite problem used in the derivation of the lower bound. Referring to Problem \eqref{eq:composite-problem}, we set $d=1$ and 
\begin{equation}
\label{def_f_reg}
    f(x):=g_{p,c}(x)-x, \qquad \varphi(x):=x,
\end{equation}
where \(g_{p,c}\colon\R\to\R\) is defined, for \(p>2\), \(c\in\left]0,1\right[\), by 
\[
g_{p,c}(x)
:=
\begin{cases}
\dfrac{c}{p}|x|^p, & |x|\le 1,\\[0.8em]
c\left(|x|-1+\dfrac1p\right), & |x|>1.
\end{cases}
\]
Moreover, $g_{p,c}$ is differentiable and its derivative is as follows
\begin{equation}
    \label{derivative}
    g'_{p,c}(x)
    =
    \begin{cases}
    c\,\operatorname{sign}(x)|x|^{p-1}, & |x|\le 1,\\[0.4em]
    c\,\operatorname{sign}(x), & |x|>1.
    \end{cases}
\end{equation}

\begin{proposition}[H\"older-smoothness]\label{prop:holder}
    The function \(f\) given in \eqref{def_f_reg}, with $p>2$ and $c\in \left]0,1\right[$, is convex and continuously differentiable. Moreover, \(f'\) is globally Lipschitz-continuous and globally bounded. Consequently, for every \(\nu\in\left]0,1\right]\), there exists \(L_\nu>0\) such that
\begin{equation*}
(\forall\,x,y\in \R)\quad
        \abs{f'(x)-f'(y)}
        \le
        L_\nu |x-y|^\nu
\end{equation*}
    meaning that \(f\in C^{1,\nu}(\R)\) for every \(\nu\in\left]0,1\right]\).
\end{proposition}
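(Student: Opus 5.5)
Since $f = g_{p,c} - x$ and the affine part changes neither convexity nor the difference $f'(x)-f'(y)$, every claim reduces to a statement about $g_{p,c}$ and its derivative $g'_{p,c}$ in \eqref{derivative}.

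\textbf{Regularity.} First we check that $g_{p,c}$ is $C^1$. On each of the open pieces $|x|<1$ and $|x|>1$ the function is smooth: $|x|^p$ is differentiable there because $p>2>1$, including at $x=0$ where its derivative vanishes. At $|x|=1$ the two pieces agree, since $\frac{c}{p} = c\left(1-1+\frac1p\right)$. The one-sided derivatives also agree, both equal to $c\operatorname{sign}(x)$. Hence $g_{p,c}$ is differentiable with derivative \eqref{derivative}, and this derivative is continuous.

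\textbf{Convexity.} It suffices to show that $g'_{p,c}$ is nondecreasing. On $[-1,1]$ the map $x\mapsto c\operatorname{sign}(x)|x|^{p-1}$ is increasing and takes values in $[-c,c]$. Outside $[-1,1]$ the derivative is constant, equal to $-c$ on the left and $c$ on the right, which matches these endpoint values. So $g'_{p,c}$ is monotone and $f$ is convex.

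\textbf{Lipschitz continuity and boundedness.} Boundedness is immediate: $|f'|\le c+1$. For the Lipschitz property, $g'_{p,c}$ is piecewise $C^1$. Its derivative is $c(p-1)|x|^{p-2}\le c(p-1)$ on $|x|<1$ and $0$ on $|x|>1$. A continuous, piecewise $C^1$ function with bounded derivative is Lipschitz; one can integrate, or apply the mean value theorem on each piece and chain across the break points $\pm1$. This gives $L_1 = c(p-1)$.

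\textbf{Hölder bound for every $\nu\in\left]0,1\right]$.} This step combines the Lipschitz and boundedness estimates. If $|x-y|\le1$, then
\[
|f'(x)-f'(y)|\le L_1|x-y|\le L_1|x-y|^\nu .
\]
If $|x-y|>1$, then
\[
|f'(x)-f'(y)|\le 2c\le 2c|x-y|^\nu .
\]
Therefore $L_\nu=\max\{L_1,2c\}$ works. Equivalently,
\[
|f'(x)-f'(y)| = |f'(x)-f'(y)|^{1-\nu}\,|f'(x)-f'(y)|^{\nu}\le (2c)^{1-\nu}L_1^\nu|x-y|^\nu .
\]

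Nothing here is genuinely hard. The only point requiring care is the gluing at $|x|=1$ and at $x=0$, which is why $p>2$ is assumed. That assumption makes $|x|^{p-1}$ itself $C^1$, so the Lipschitz bound holds uniformly.
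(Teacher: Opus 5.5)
Your proof is correct and takes essentially the same route as the paper. Both bound $g''_{p,c}$ by $c(p-1)$ off $\pm1$ to get Lipschitz continuity of $f'=g'_{p,c}-1$, use $\abs{g'_{p,c}}\le c$ for boundedness, and split into $\abs{x-y}\le1$ versus $\abs{x-y}>1$ to get $L_\nu=\max\{c(p-1),2c\}$; your explicit checks of the gluing at $\pm1$, of convexity via monotonicity of $g'_{p,c}$, and the alternative constant $(2c)^{1-\nu}(c(p-1))^{\nu}$ are extra detail (small quibble: $p\ge2$ already suffices for this proposition, and the strict $p>2$ is what the later lower bound needs).
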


\begin{proof}
Let $p>2$ and $c\in \left]0,1\right[$.
The function $g'_{p,c}$ given in \eqref{derivative} is continuous on $\R$ and differentiable on $\R$
with the exception of $\pm 1$ and it holds
\begin{equation*}
    g''_{p,c}(x) =
\begin{cases}
    c(p-1)|x|^{p-2} &\text{if } x\in\left]-1,1\right[\\
    0 &\text{if}\ \R\setminus[-1,1].
\end{cases}
\end{equation*}
This shows that $\abs{g''_{p,c}(x)}\leq c(p-1)$, for every $x\in \R\setminus\{\pm 1\}$, and hence,
by the mean value theorem, that $g'_{p,c}$ is Lipschitz continuous with constant $c(p-1)$.
Consequently, recalling \eqref{def_f_reg}, it is clear that $f$ is also convex and Lipschitz-smooth, with constant $c(p-1)$ and, for every $x\in\R$,
\[
    f'(x)=g'_{p,c}(x)-1.
\]
Also, it follows from \eqref{derivative} that, for every $x\in\R$,
\[
    -c\le g'_{p,c}(x)\le c
\]
and hence
\[
    -(1+c)\le f'(x)\le -(1-c).
\]
Thus \(f'\) is globally bounded.
The case \(\nu=1\) follows directly from the global Lipschitz continuity of \(f'\). Let \(\nu\in\left]0,1\right[\), and set for the sake of brevity \(L=c(p-1)\). 
Let $x,y\in \R$.
If \(|x-y|\le1\), then
\[
    |f'(x)-f'(y)|
    \le
    L|x-y|
    \le
    L|x-y|^\nu.
\]
If \(|x-y|>1\), then, using boundedness of \(f'\),
\[
    |f'(x)-f'(y)|
    \le
    2c
    \le
    2c|x-y|^\nu.
\]
Therefore
\begin{equation*}
(\forall\,x,y\in \R)\quad
    \abs{f'(x)-f'(y)}
    \le
    \max\{L,2c\}|x-y|^\nu.
\end{equation*}
This proves the \(C^{1,\nu}\) property for every \(\nu\in\left]0,1\right]\).
\end{proof}

\begin{remark}
The derivative of the smooth part of $F$ does not vanish at the minimizer:
\[
    f'(0)=g'_{p,c}(0)-1=-1\neq0.
\]
This is the key feature of the construction. The point \(x_\star=0\) is a minimizer of the composite function, but the derivative of the smooth part \(f'(x_\star)\) is nonzero. Therefore an AdaGrad metric driven by \(|f'(x_n)|^2\) may keep growing linearly even when \(x_n\to x_\star\).
\end{remark}

\section{Analysis of the lower bound}

We now analyze the iterate \eqref{update} for the special objective components $f$ and $\varphi$ given in \eqref{def_f_reg}.
We first note that, directly from the definition the proximity operator, it follows that
\begin{equation*}
(\forall\,\gamma>0)(\forall\,v\in \R)\qquad \prox_{\gamma\varphi}(v)=v-\gamma.
\end{equation*}
\noindent
Let $n\in \N$.
If we denote with $\gamma_n = \eta/w_n$, recalling that $f'(x)=g'_{p,c}(x)-1$, we have
\[
\begin{aligned}
    x_{n+1}
    &=
    x_n-\gamma_n f'(x_n)-\gamma_n  \\
    &=
    x_n-\gamma_n(g'_{p,c}(x_n)-1)-\gamma_n \\
    &=
    x_n-\gamma_ng'_{p,c}(x_n).
\end{aligned}
\]
Thus, the affine part of \(f\) is exactly cancelled out by the proximal step associated with \(\varphi\). However, it remains in the AdaGrad denominator, which is built from \(f'(x_n)=g'_{p,c}(x_n)-1\).
By construction, \(|g'_{p,c}(x)|\le c<1\) for every \(x\in\R\). Hence, for every $x\in \R$, $f'(x)\leq 0$ and
\[
    1-c
    \le
    |f'(x)|
    \le
    1+c.
\]
Consequently,
\[
    \varepsilon+(1-c)\sqrt{n+1}
    \le
    w_n
    \le
    \varepsilon+(1+c)\sqrt{n+1}.
\]

\begin{lemma}[Convergence to the minimizer]
\label{lem:conv-zero}
Let $F=f+\varphi$, with $f$ and $\varphi$ defined in \eqref{def_f_reg}, with $p>2$ and $c\in \left]0,1\right[$.
Let $\eta$ be such that
\begin{equation*}
    0<\eta< \frac{1-c}{c(p-1)}
\end{equation*}
and let $(x_n)_{n\in \N}$ be generated by AdaGrad algorithm \eqref{update}, with $x_0\neq 0$.
Then \(x_n\to0\). Moreover, the sequence $(x_n)_{n\in \N}$ is nonincreasing and with constant sign.
\end{lemma}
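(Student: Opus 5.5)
The plan is to work entirely with the reduced recursion derived just above, $x_{n+1}=x_n-\gamma_n g'_{p,c}(x_n)$ with $\gamma_n=\eta/w_n$. We first record the a priori bound on $\gamma_n$. Since $w_n\ge \varepsilon+(1-c)\sqrt{n+1}\ge 1-c$ for every $n$, we get
\[
\gamma_n\le \frac{\eta}{1-c}<\frac{1}{c(p-1)}=\frac1L,
\]
where $L=c(p-1)$ is the Lipschitz constant of $g'_{p,c}$ from Proposition~\ref{prop:holder}. By symmetry ($g'_{p,c}$ is odd) it suffices to treat $x_0>0$.

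\emph{Step 1: sign preservation and monotonicity.} Assume $x_n>0$. Then $g'_{p,c}(x_n)>0$. Since $g'_{p,c}(0)=0$ and $g'_{p,c}$ is $L$-Lipschitz, we have $0<g'_{p,c}(x_n)\le L x_n$. Hence
\[
x_{n+1}=x_n-\gamma_n g'_{p,c}(x_n)\ge x_n(1-\gamma_n L)>0
\quad\text{and}\quad
x_{n+1}<x_n .
\]
By induction, $(x_n)$ is positive and strictly decreasing. (I read ``nonincreasing'' as applying to $\abs{x_n}$, or to $x_n$ in the positive case; for $x_0<0$ the same argument shows $x_n$ is negative and increasing, i.e.\ $\abs{x_n}$ decreases.) Note also that $x_n$ never equals $0$, so the sign stays constant.

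\emph{Step 2: convergence to $0$.} By monotonicity, $x_n\downarrow \ell\ge 0$. Suppose $\ell>0$. Then $g'_{p,c}(x_n)\ge g'_{p,c}(\ell)=:\delta>0$, since $g'_{p,c}$ is increasing. Moreover $\gamma_n\ge \eta/(\varepsilon+(1+c)\sqrt{n+1})$, so
\[
x_0-x_{N}=\sum_{n<N}\gamma_n g'_{p,c}(x_n)\ge \delta\eta\sum_{n<N}\frac{1}{\varepsilon+(1+c)\sqrt{n+1}}\to\infty .
\]
This contradicts $x_N\ge 0$, hence $\ell=0$.

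The only delicate point is Step 1, where we must ensure that the step never overshoots past $0$. This is exactly where the condition $\eta<(1-c)/(c(p-1))$ enters, through the uniform lower bound $w_n\ge 1-c$ combined with the Lipschitz bound $g'_{p,c}(x)\le Lx$ for $x>0$. Divergence of $\sum\gamma_n$ then follows from the upper bound on $w_n$, which gives $\gamma_n$ of order $n^{-1/2}$.
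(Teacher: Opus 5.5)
Your proof is correct and follows essentially the same route as the paper: the bound $\gamma_n\le \eta/(1-c)<1/L$ together with $0\le g'_{p,c}(x)\le Lx$ for $x>0$ gives sign preservation and monotonicity of $|x_n|$. Divergence of $\sum_n \eta/(\varepsilon+(1+c)\sqrt{n+1})$ then forces the limit to be $0$. Your use of the monotonicity of $g'_{p,c}$ to get $g'_{p,c}(x_n)\ge g'_{p,c}(\ell)$ is a harmless simplification of the paper's infimum over $\bar r\le |x|\le r_0$.

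One small caveat: the reduction to $x_0>0$ is not a literal symmetry of the dynamics. The weight $w_n$ is built from $|f'(x_k)|=|g'_{p,c}(x_k)-1|$, which is not even in $x_k$. However, your argument only uses the sign-independent bounds $1-c\le w_n\le\varepsilon+(1+c)\sqrt{n+1}$, so it transfers verbatim to $x_0<0$, as you indicate.
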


\begin{proof}
Set \(L:=c(p-1)\). By Proposition~\ref{prop:holder}, \(g'_{p,c}\) is Lipschitz-continuous with constant \(L\), and \(g'_{p,c}(0)=0\). 
Let $n\in \N$. Since \(w_n\ge1-c\), we have
\begin{equation*}
    \gamma_n
    =
    \frac{\eta}{w_n}
    \le
    \frac{\eta}{1-c}
    <
    \frac1{c(p-1)}
    =
    \frac1L\ \Rightarrow\ \gamma_n L<1.
\end{equation*}
Suppose first that \(x_n>0\). Since \(0\le g'_{p,c}(x_n)\le Lx_n\), we get
\begin{equation*}
    x_{n+1}
    =
    x_n-\gamma_ng'_{p,c}(x_n)
    \ge
    x_n-\gamma_nLx_n
    >0.
\end{equation*}
Moreover, \(g'_{p,c}(x_n)\ge0\), and therefore \(x_{n+1}\le x_n\). Hence \(0< x_{n+1}\le x_n\).
Similarly, if \(x_n<0\), then \(g'_{p,c}(x_n)\le0\) and \(|g'_{p,c}(x_n)|\le L|x_n|\). Thus
\begin{equation*}
    x_{n+1}
    =
    x_n-\gamma_ng'_{p,c}(x_n) \leq     x_n-\gamma_nLx_n
    < 0,
\end{equation*}
while also \(x_{n+1}\ge x_n\). Hence \(x_n\le x_{n+1}<0\).
Therefore the sign of the iterates is preserved. In addition, \(r_n:=|x_n|\) is nonincreasing, and so \(r_n\) converges to some \(\bar r\ge0\). We prove that \(\bar r=0\). Suppose by contradiction that \(\bar r>0\). Since the sign of \(x_n\) is fixed and \(r_n\in[\bar r,r_0]\), where \(r_0:=|x_0|\), continuity of \(g'_{p,c}\) and the fact that \(g'_{p,c}(x)\neq0\) for \(x\neq0\) imply
\[
    a
    :=
    \inf_{\bar r\le |x|\le r_0}
    |g'_{p,c}(x)|
    >
    0.
\]
Thus \(|g'_{p,c}(x_n)|\ge a\) for every \(n\). Using
\[
    w_n\le \varepsilon+(1+c)\sqrt{n+1},
\]
we obtain
\[
    r_n-r_{n+1}
    =
    \gamma_n|g'_{p,c}(x_n)|
    \ge
    \frac{\eta a}{\varepsilon+(1+c)\sqrt{n+1}}.
\]
Summing this inequality gives a contradiction, since
\[
    \sum_{n=0}^{+\infty}
    \frac{1}{\varepsilon+(1+c)\sqrt{n+1}}
    =
    +\infty,
\]
but the left-hand side telescopes:
\[
    \sum_{n=0}^N(r_n-r_{n+1})
    =
    r_0-r_{N+1}
    \le r_0.
\]
Hence \(\bar r=0\), and therefore \(x_n\to0\).
\end{proof}

\begin{theorem}[Lower bound]
\label{lem:nonasymptotic-lower-bound}
Let $F=f+\varphi$, with $f$ and $\varphi$ defined in \eqref{def_f_reg}, with $p>2$ and $c\in \left]0,1\right[$.
Let $\eta$ be such that
\begin{equation*}
    0<\eta< \frac{1-c}{c(p-1)}
\end{equation*}
and let $(x_n)_{n\in \N}$ be generated by AdaGrad algorithm \eqref{update}, with $x_0\neq 0$. Then, there exists $\kappa>0$
such that
\begin{equation*}
(\forall\,n\in \N)\quad
    F(x_n)-F_\star
    \ge
    \kappa\,
    n^{-\frac{p}{2(p-2)}}.
\end{equation*}
\end{theorem}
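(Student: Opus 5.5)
Since $x_\star=0$ and $F(x)=g_{p,c}(x)$, we have $F_\star=0$. By Lemma~\ref{lem:conv-zero}, $|x_n|$ is nonincreasing, tends to $0$, and $x_n\neq0$ for all $n$. So there is $N_0$ with $|x_n|\le 1$ for $n\ge N_0$, and then
\[
F(x_n)-F_\star=\frac{c}{p}|x_n|^p .
\]
It therefore suffices to prove a lower bound $|x_n|\ge C\, n^{-1/(2(p-2))}$ for large $n$. Raising it to the power $p$ gives exactly the exponent $p/(2(p-2))$. The finitely many remaining indices are absorbed by shrinking $\kappa$, since $F(x_n)>0$ for every $n$ (because $x_n\neq 0$ and $g_{p,c}>0$ off zero). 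At $n=0$ the expression $n^{-\alpha}$ should be read as $+\infty$, or the statement taken for $n\ge1$.

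To get the lower bound on $r_n:=|x_n|$, work on $n\ge N_0$, where the recursion is
\[
r_{n+1}=r_n\bigl(1-\gamma_n c\, r_n^{p-2}\bigr),\qquad \gamma_n\le \frac{\eta}{\varepsilon+(1-c)\sqrt{n+1}}\le \frac{\eta}{(1-c)\sqrt{n+1}}=:\frac{\beta}{\sqrt{n+1}} .
\]
Following the standard trick for recursions $r_{n+1}=r_n(1-a_n r_n^{q})$ with $q=p-2>0$, consider $s_n:=r_n^{-q}$. Since $\gamma_n c r_n^{q}\le \gamma_n L r_n^q<1$ (as in Lemma~\ref{lem:conv-zero}), we can write
\[
s_{n+1}=s_n\bigl(1-\gamma_n c r_n^{q}\bigr)^{-q}.
\]
We now want an estimate of the form $(1-t)^{-q}\le 1+Kt$ for $t\in[0,\theta]$ with $\theta<1$, where $K$ depends on $q$ and $\theta$ (by convexity of $t\mapsto(1-t)^{-q}$). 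Here $t=\gamma_n c r_n^q$ is bounded by $\theta:=\eta c(p-1)/(1-c)<1$, uniformly in $n$, because $c r_n^q\le c(p-1)$. This yields
\[
s_{n+1}\le s_n+K\gamma_n c\le s_n+\frac{Kc\beta}{\sqrt{n+1}} .
\]

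Summing from $N_0$ to $n-1$ gives $s_n\le s_{N_0}+2Kc\beta\sqrt{n}\le C' \sqrt n$ for $n\ge 1$. Hence $r_n\ge (C')^{-1/q} n^{-1/(2q)}$, and so
\[
F(x_n)-F_\star\ge \frac{c}{p}(C')^{-p/q}\, n^{-\frac{p}{2(p-2)}} .
\]

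The main obstacle is the uniform control of $t$ away from $1$, which is needed so that the linearization constant $K$ is finite. This is exactly where the stepsize hypothesis $\eta<(1-c)/(c(p-1))$ enters. The rest is bookkeeping with $N_0$ and the constants.
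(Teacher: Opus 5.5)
Your proposal is correct and follows the paper's proof essentially step for step. You pass to $r_n^{-q}$ with $q=p-2$, linearize $(1-t)^{-q}$, and use $\gamma_n\le \eta/((1-c)\sqrt{n+1})$; you then sum to get $r_n^{-q}=\mathcal O(\sqrt n)$, raise to the power $p$, and absorb the finitely many early indices using $x_n\neq 0$. The only cosmetic difference is how $t$ is kept away from $1$: you bound it uniformly by $\theta=\eta c(p-1)/(1-c)<1$ using the stepsize condition, whereas the paper enlarges $\bar n$ so that $\theta_n\le 1/2$ using $r_n\to0$. So, contrary to your closing remark, the stepsize hypothesis is not essential at this step; it is needed only through Lemma~\ref{lem:conv-zero}.
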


\begin{proof}
Let \(r_n:=|x_n|\). By Lemma~\ref{lem:conv-zero}, \(r_n\downarrow0\), and the sign of \(x_n\) is constant. By oddness of \(g'_{p,c}\), the recursion becomes
\[
    r_{n+1}
    =
    r_n-\gamma_ng'_{p,c}(r_n).
\]
Set \(q:=p-2\). Since \(r_n\to0\), there exists \(\bar{n}\in\N\) such that \(0<r_n\le1\) for every \(n\ge \bar{n}\). Hence
\[
(\forall\,n\geq \bar{n})\quad    g'_{p,c}(r_n)=cr_n^{p-1}.
\]
Increasing \(\bar{n}\), if necessary, we may also assume that
\[
(\forall\,n\geq \bar{n})\quad
    \theta_n
    :=
    \gamma_n\frac{g'_{p,c}(r_n)}{r_n}
    \le \frac12.
\]
Indeed, \(g'_{p,c}(r_n)/r_n=cr_n^{p-2}\), \(r_n\to0\), and \(\gamma_n\le\eta/(1-c)\).
Now, let $n\in \N$ with $n\geq \bar{n}$.
Since \(0\le\theta_n\le1/2\), the mean value theorem applied to \(\theta\mapsto(1-\theta)^{-q}\) gives
\[
    (1-\theta_n)^{-q}-1
    \le
    q2^{q+1}\theta_n.
\]
Therefore, since \(r_{n+1}=r_n(1-\theta_n)\), we have
\[
\begin{aligned}
    r_{n+1}^{-q}-r_n^{-q}
    &=
    r_n^{-q}\left((1-\theta_n)^{-q}-1\right) \\
    &\le
    q2^{q+1}r_n^{-q}\theta_n \\
    &=
    q2^{q+1}\gamma_n
    \frac{g'_{p,c}(r_n)}{r_n^{q+1}}.
\end{aligned}
\]
Since \(q+1=p-1\) and \(g'_{p,c}(r_n)=cr_n^{p-1}\), we get
\[
    r_{n+1}^{-q}-r_n^{-q}
    \le
    q2^{q+1}c\,\gamma_n.
\]
Moreover,
\[
    w_n
    \ge
    \varepsilon+(1-c)\sqrt{n+1}
    \ge
    (1-c)\sqrt{n+1}.
\]
Thus
\[
    \gamma_n
    =
    \frac{\eta}{w_n}
    \le
    \frac{\eta}{(1-c)\sqrt{n+1}}.
\]
Consequently, for every \(n\ge \bar{n}\),
\[
    r_{n+1}^{-q}-r_n^{-q}
    \le
    \frac{q2^{q+1}c\eta}{(1-c)\sqrt{n+1}}.
\]
Summing from \(\bar{n}\) to \(n-1\), we obtain
\[
\begin{aligned}
    r_n^{-q}
    &\le
    r_{\bar{n}}^{-q}
    +
    \frac{q2^{q+1}c\eta}{1-c}
    \sum_{k=\bar{n}}^{n-1}\frac1{\sqrt{k+1}}  \\
    &\le
    r_{\bar{n}}^{-q}
    +
    \frac{2q2^{q+1}c\eta}{1-c}\sqrt n.
\end{aligned}
\]
In the end there exists \(A>0\) such that, for every integer $n\geq \bar{n}$,
\[
    r_n^{-q}\le A\sqrt n,
\]
and hence
\[
    r_n
    \ge
    A^{-1/q}n^{-\frac1{2q}}
    =
    A^{-\frac1{p-2}}n^{-\frac1{2(p-2)}}.
\]
Finally, using the above lower bound on \(r_n\), we obtain
\[
    F(x_n)-F_\star
    =
    g_{p,c}(x_n)
    =
    g_{p,c}(r_n) = \frac{c}{p}r_n^p
    \ge
    C_1\,
    n^{-\frac{p}{2(p-2)}},
\]
for every integer $n\geq \bar{n}$, where
\[
    C_1
    :=
    \frac{c}{p}A^{-\frac{p}{p-2}}>0.
\]
It remains to absorb the finite number of iterations before $\bar{n}$. Since \(x_0\neq0\), Lemma~\ref{lem:conv-zero} shows that the sign of \(x_n\) is preserved, and in particular \(x_n\neq0\) for every \(n\). Therefore \(F(x_n)-F_\star>0\) for every \(n\). Define
\[
    C_2
    :=
    \min_{1\le n<\bar{n}}
    n^{\frac{p}{2(p-2)}}
    \bigl(F(x_n)-F_\star\bigr) > 0,
\]
with the convention that \(C_2=+\infty\) if \(\bar{n}\le1\). Setting
\[
    \kappa:=\min\{C_1,C_2\}
\]
gives, 
\begin{equation*}
    F(x_n)-F_\star
    \ge
    \kappa\,
    n^{-\frac{p}{2(p-2)}},
\end{equation*}
for every \(n\ge1\), and the statement follows.
\end{proof}

\begin{corollary}[Failure of AdaGrad for every H\"older-smooth class]
\label{thm:main}
Fix $\nu\in \left]0,1\right]$ and $\alpha$
such that
\begin{equation*}
\frac 1 2 < \alpha<\frac{1+\nu}{2}.
\end{equation*}
Then there exists $f \in C^{1,\nu}(\R)$ and $\varphi\colon \R\to \R$ affine such that,
if $(x_n)_{n\in \N}$ is generated by AdaGrad algorithm~\eqref{update} with $x_0\neq 0$ and sufficiently small $\eta>0$,
then there exists $\kappa>0$ such that
\begin{equation*}
(\forall\,n\in \N)\qquad 
F(x_n)-F_\star
    \geq
    \kappa n^{-\alpha}.
\end{equation*} 
Consequently, AdaGrad can have a slow rate which is as close as desired to $ \mathcal O(1/\sqrt{n})$ on convex composite problems with \(C^{1,\nu}\) smooth part, for any $\nu\in \left]0,1\right]$, and in particular it does not, in general, achieve the H\"older-smooth rate
\begin{equation*}
    \mathcal O\left(n^{-\frac{1+\nu}{2}}\right),
\end{equation*}
which was proved for unconstrained \(C^{1,\nu}\) smooth problems in \citep{orabona2023normalized}.
\end{corollary}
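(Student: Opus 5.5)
The plan is to obtain the corollary directly from Theorem~\ref{lem:nonasymptotic-lower-bound} by choosing the exponent $p$ so that the rate $n^{-p/(2(p-2))}$ is at least $n^{-\alpha}$. We take $f:=g_{p,c}(x)-x$ and $\varphi(x)=x$ as in \eqref{def_f_reg}, with any fixed $c\in\left]0,1\right[$, say $c=1/2$. Proposition~\ref{prop:holder} guarantees that $f\in C^{1,\nu}(\R)$ for every $\nu\in\left]0,1\right]$ and every $p>2$. Consequently the H\"older parameter $\nu$ places no constraint on $p$; it enters only through the upper bound on $\alpha$. The affine function $\varphi$ is clearly in $\Gamma_0(\R)$.

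Next we choose $p$. The map $p\mapsto \frac{p}{2(p-2)}=\frac12+\frac{1}{p-2}$ is continuous and strictly decreasing on $\left]2,+\infty\right[$. It tends to $+\infty$ as $p\downarrow2$ and to $1/2$ as $p\to\infty$. Since $\alpha>1/2$, we can solve $\frac12+\frac1{p-2}=\alpha$ explicitly, which gives
\[
p:=2+\frac{1}{\alpha-\frac12}>2.
\]
With this choice the exponent in Theorem~\ref{lem:nonasymptotic-lower-bound} equals $\alpha$ exactly. Alternatively, one could take any larger $p$, for which $\frac{p}{2(p-2)}\le\alpha$, and use $n^{-p/(2(p-2))}\ge n^{-\alpha}$ for $n\ge1$.

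We then apply Theorem~\ref{lem:nonasymptotic-lower-bound} with "sufficiently small $\eta$" meaning $0<\eta<\frac{1-c}{c(p-1)}$. This yields $\kappa>0$ with $F(x_n)-F_\star\ge\kappa n^{-\alpha}$ for all $n\ge1$; at $n=0$ the bound is read with the convention used in the theorem, or by treating $n\ge1$.

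For the "consequently" part, we use $\alpha<\frac{1+\nu}{2}$. If we had $F(x_n)-F_\star\le Cn^{-(1+\nu)/2}$, then $\kappa n^{-\alpha}\le Cn^{-(1+\nu)/2}$, that is, $n^{(1+\nu)/2-\alpha}\le C/\kappa$ for all $n$. This is impossible, since the exponent is positive. Letting $\alpha\downarrow 1/2$ shows that the rate can be made as close to $n^{-1/2}$ as desired.

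There is no real obstacle here. The only step needing care is checking that the choice of $p$ is legitimate for the given $\nu$, which holds because Proposition~\ref{prop:holder} is uniform in $\nu$ for every $p>2$. Note also that the admissible range of $\eta$ depends on $p$, and hence on $\alpha$.
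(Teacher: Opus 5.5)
Your proposal is correct and takes the same route as the paper: your $p=2+1/(\alpha-\tfrac12)$ is the paper's $p=4\alpha/(2\alpha-1)$, and you then combine Proposition~\ref{prop:holder} with Theorem~\ref{lem:nonasymptotic-lower-bound} exactly as the paper does. Your treatment of $n=0$ (where the bound has to be read for $n\ge 1$) and your explicit contradiction with an $\mathcal{O}(n^{-(1+\nu)/2})$ rate add details that the paper leaves implicit.
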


\begin{proof}
Set $p = 4\alpha /(2\alpha-1)>2$. Then
\begin{equation*}
\frac{p}{2(p-2)} = \alpha \in \left]\frac1 2, \frac{1+\nu}{2}\right[.
\end{equation*}
Choose $f$ and $\varphi$ as in Theorem~\ref{lem:nonasymptotic-lower-bound} (with $p$ given above and $c\in \left]0,1\right[$).
Then, by Proposition~\ref{prop:holder}, \(f\in C^{1,\nu}(\R)\) and the statement follows from Theorem~\ref{lem:nonasymptotic-lower-bound}.
\end{proof}
\begin{remark}[Averaged iterate]
The same lower bound holds for the averaged iterate
\[
    \bar x_n:=\frac1n\sum_{k=1}^n x_k.
\]
Indeed, the sign of \(x_n\) is preserved and \(|x_n|\) is nonincreasing. Hence
\[
    |\bar x_n|\ge |x_n|.
\]
Since \(F=g_{p,c}\) is even and nondecreasing as a function of \(|x|\), we have
\[
    F(\bar x_n)-F_\star
    \ge
    F(x_n)-F_\star.
\]
Thus, for every integer $n\geq 1$,
\[
    F(\bar x_n)-F_\star
    \ge
    \kappa n^{-\alpha}.
\]
\end{remark}

\section{Discussion: what should be accumulated?}\label{section:4}

The lower bound above is not due to stochastic noise, nonconvexity, or lack of smoothness of \(f\). Rather, it arises from using the wrong stationarity measure in a composite problem. In the construction of the pathological example, since the derivative of the smooth component does not vanish, we force the accumulator to grow linearly. Consequently, the effective stepsize satisfies
\[
\frac{\eta}{w_n}
= \mathcal{O}\left( \frac{1}{\sqrt{n}} \right),
\]
even though the iterates approach the minimizer. This is the critical issue: as an example, in the Lipschitz-smooth setting, the textbook proximal gradient method uses a constant stepsize, whereas here the decay of the effective stepsize is unavoidable and prevents the standard convergence rates from being recovered.\\

\noindent
A more suitable quantity for the accumulation mechanism in the composite setting is the gradient mapping, introduced in this context by \citet{antonakopoulos2025adaptive,wang2026universal}. For a fixed \(\gamma>0\), define the gradient mapping
\[
    \gradmap_\gamma(x)
    :=
    \frac1\gamma
    \left(
        x-\prox_{\gamma\varphi}
        \bigl(x-\gamma f'(x)\bigr)
    \right).
\]
This quantity satisfies
\[
    \gradmap_\gamma(x_\star)=0
\]
whenever \(x_\star\) is a minimizer of the composite objective $f+\varphi$ and the gradient mapping is expected to vanish as the iterates converge to a solution. Thus an adaptive mechanism that accumulates
\[
    \sum_{k=0}^n |\gradmap_{\eta/w_k}(x_k)|^2
\]
would not keep adding a nonzero constant merely because \(f'(x_\star)\neq 0\). \\

\noindent
An alternative way to avoid the same pathology is to accumulate gradient differences rather than gradient magnitudes. This is the idea behind AdaGrad-Diff in \citep{bojovic2026adagrad}. In the scalar case, the difference-based metric takes the form
\[
w_{n}^{\mathrm{diff}}
=
\varepsilon+
\left(
\sum_{k=0}^n |f'(x_k)-f'(x_{k-1})|^2
\right)^{1/2},
\qquad
f'(x_{-1})=0.
\]
The crucial distinction is that, even if the gradient magnitudes \(|f'(x_n)|\) do not vanish, the successive differences still decay at the scale needed for appropriate convergence rate. In the Lipschitz-smooth case, these differences are square-summable, so the corresponding stepsize remains of constant order. In the H\"older-smooth case \(f\in C^{1,\nu}\), it is not difficult to show, following the ideas in \citep{bojovic2026adagrad}, that the growth is instead
\[
{w_n^{\mathrm{diff}}}
=
\mathcal O\left(n^{\frac{1-\nu}{2}}\right).
\]
This is precisely aligned with the stepsize decay associated with the H\"older-smooth setting for unconstrained optimization problems considered in \citep{orabona2023normalized}.
Therefore the metric is not forced to grow linearly simply because the limiting gradient of the smooth part is nonzero. \\

\noindent
In this sense, both gradient-mapping accumulation and AdaGrad-Diff-type accumulation target quantities that are better aligned with composite optimality. The example above shows that this distinction is not merely cosmetic: for standard gradient accumulation, the mismatch can rule out the standard \(\mathcal{O}(n^{-(1+\nu)/2})\) rate even in a deterministic one-dimensional convex H\"older-smooth composite problem.

\section{Conclusion}

We have provided an example showing that Adagrad does not adapt to the level of H\"older-smoothness of the objective function in a composite optimization setting.
 The critique relies on the fact that the gradient of the smooth part may not vanish at a  minimizer for a composite problem. 
Our example suggests that, in the composite setting, adaptive mechanisms should accumulate quantities consistent with the composite optimality condition. Two natural candidates are the gradient mapping and successive gradient differences. The gradient mapping vanishes at composite minimizers, while successive gradient differences do not accumulate a nonzero limiting gradient. Both therefore avoid the failure mechanism identified here: the linear growth of the gradient accumulator even as the iterates converge to a composite minimizer with $f'(x_\star)\neq 0$.
\bibliographystyle{plainnat}
\bibliography{ref}

@article{bojovic2026adagrad,
  title={Ada{G}rad-{D}iff: A {N}ew {V}ersion of the {A}daptive {G}radient {A}lgorithm},
  author={Bojovic, Matia and Salzo, Saverio and Pontil, Massimiliano},
  journal={arXiv preprint arXiv:2602.13112},
  year={2026}
}

@article{JMLR:v12:duchi11a,
  author  = {John Duchi and Elad Hazan and Yoram Singer},
  title   = {Adaptive {S}ubgradient {M}ethods for {O}nline {L}earning and {S}tochastic {O}ptimization},
  journal = {Journal of Machine Learning Research},
  year    = {2011},
  volume  = {12},
  number  = {61},
  pages   = {2121--2159},
}

@article{wang2026universal,
  title={Universal {A}daptive {P}roximal {G}radient {M}ethods via {G}radient {M}apping {A}ccumulation},
  author={Wang, Zimeng and Yurtsever, Alp},
  journal={arXiv preprint arXiv:2605.05944},
  year={2026}
}

@article{orabona2023normalized,
  title={Normalized gradients for all},
  author={Orabona, Francesco},
  journal={arXiv preprint arXiv:2308.05621},
  year={2023}
}

@article{levy2018online,
  title={Online adaptive methods, universality and acceleration},
  author={Levy, Kfir Y and Yurtsever, Alp and Cevher, Volkan},
  journal={Advances in neural information processing systems},
  volume={31},
  year={2018}
}

@article{antonakopoulos2025adaptive,
  title={Adaptive bilevel optimization},
  author={Antonakopoulos, Kimon and Sabach, Shoham and Viano, Luca and Hong, Mingyi and Cevher, Volkan},
  journal={ACM/IMS Journal of Data Science},
  volume={2},
  number={2},
  pages={1--29},
  year={2025},
  publisher={ACM New York, NY}
}

@article{nesterov2015universal,
  title={Universal gradient methods for convex optimization problems},
  author={Nesterov, Yu},
  journal={Mathematical Programming},
  volume={152},
  number={1},
  pages={381--404},
  year={2015},
  publisher={Springer}
}

\end{document}